\newtheorem{theorem}{Theorem}
\newtheorem{lemma}[theorem]{Lemma}
\newtheorem{proposition}[theorem]{Proposition}
\newtheorem{example}[theorem]{Example}
\newtheorem{remark}[theorem]{Remark}
\title{Stabilization  of stochastic approximation by step size adaptation}
\date{}
\begin{document}

\author{Sameer Kamal\footnote{School of Technology and Computer Science, Tata Institute of Fundamental Research,
Homi Bhabha Road, Mumbai 400005, India. E-mail:
sameer.kamal@gmail.com. This research was supported in part by an
Infosys Fellowship.}}

\maketitle

\vspace{.5in}

\noindent \textbf{Abstract:} A scheme for stabilizing stochastic
approximation iterates by adaptively scaling the step sizes is
proposed and analyzed. This scheme leads to the same limiting
differential equation as the original scheme and therefore has the
same limiting behavior, while avoiding the difficulties associated
with projection schemes. The proof technique requires only that the
limiting o.d.e. descend a certain Lyapunov function outside an
arbitrarily large bounded set.

 \vspace{.5in}

\noindent \textbf{Key words:} stochastic approximation, almost sure
boundedness, step size adaptation, limiting o.d.e.

\section{Introduction}

Stochastic approximation was originally introduced in \cite{robbins}
as a scheme for finding zeros of a nonlinear function under noisy
measurements. It has since become one of the main workhorses of
statistical computation, signal processing, adaptive schemes in
control engineering and artificial intelligence, economic models,
and so on. See \cite{benveniste}, \cite{borkar}, \cite{chen},
\cite{duflo}, \cite{kushner} for some recent texts that give an
extensive account. One of the successful approaches for its
convergence analysis has been the `o.d.e.\ approach' of \cite{dere},
\cite{ljung} which treats it as a noisy discretization of an
ordinary differential equation (o.d.e.) with slowly decreasing step
sizes. The convergence analysis is usually of the form: if the
iterates remain stable, i.e., a.s.\ bounded, then they converge
a.s.\ to a set predicted by the o.d.e.\ analysis. Stability tests
that establish a.s. boundedness are typically geared for specific
applications and require stringent assumptions on the `drift' term.
See, e.g., \cite{abounadi}, \cite{borkarmeyn}, \cite{Tsi} for some
recent stability tests motivated by reinforcement learning
applications, that crucially use resp.\ long term stability w.r.t.\
initial data, exact linear growth, or contraction-like properties
for the drift. There does not seem to be a broad enough test to cover
a reasonably generic class of stochastic approximation algorithms.\\

An alternative to establishing a priori stability is to force it by
suitably modifying the algorithm, the most popular modification being
to project it onto a bounded set every time it exits from the same
\cite{KushCl}, \cite{chen}. This, however, is not without its pitfalls.
One major problem is that the projection operation can introduce spurious equilibria.
Another is that the choice of the bounded set in question needs to be carefully done,
in particular it should include the desired asymptotic limit (point or set) which is
usually not known a priori.\\

Motivated by this, we propose and analyze a different scheme for
stabilizing the iterates, viz., an adaptation of step sizes that
controls the growth of the iterates without affecting their
asymptotic behavior. This amounts to scaling the step sizes
appropriately when the iterates are sufficiently far away from the
origin. In fact, one can argue that at most a finite random number
of steps differ from the original scheme.\\

Another offshoot of our analysis is that instead of requiring the
o.d.e. to descend the Lyapunov function everywhere where the
function isn't at its minimum, we only require it to do so outside a sphere of arbitrarily large radius.
While this  is hardly surprising, the fact does not seem to have been formally recorded in literature.

\section{Preliminaries}

Throughout this article we allow the letter $c$ to denote a possibly
different constant in different places.\\

 Consider the $\mathbb R^d$-valued stochastic approximation iterates
 \begin{equation}
 x_{n+1} = x_{n} + a(n)[h(x_n)+M_{n+1}], \label{SA}
 \end{equation}
and their `o.d.e.' limit
\begin{equation}
\dot{x}(t) = h(x(t)). \label{ode}
\end{equation}
Let $W(\cdot):\mathbb R^d\rightarrow[0, \infty)$ be a continuously
differentiable Lyapunov function. We make the following assumptions
regarding $h(\cdot)$, $a(n)$, $M_{n+1}$, and $W(\cdot)$

\begin{itemize}
\item[\textbf{(A1)}] $h(\cdot)$ is locally Lipschitz.

\item[\textbf{(A2)}] Step size assumptions.
\begin{itemize}
\item[(i)] $\sum_n a_n=\infty$.
\item[(ii)] $\sum_n a_n^2<\infty$.
\end{itemize}

\item[\textbf{(A3)}]
Martingale difference assumptions.
\begin{itemize}
\item[(i)] $(M_n)$ is a martingale difference sequence w.r.t. the
filtration $(\mathcal{F}_n)$ where $\mathcal{F}_n=\sigma(x_0,
M_1,\ldots, M_n)$. Thus, $E[M_{n+1}|\mathcal{F}_n]=0$ a.s. for all
$n\geq0$.
\item[(ii)] $M_n$ is square integrable for all $n\geq 0$ and there
exists a locally bounded and measurable function $f(\cdot):\mathbb R^d\rightarrow
[0, \infty)$ such that
\[
E[\|M_{n+1}\|^2|\mathcal{F}_n]\leq f(x_n)\text{ a.s. }
\]
\end{itemize}
\item[\textbf{(A4)}] Lyapunov function assumptions.
\begin{itemize}
\item[(i)] $W(x)\geq 0$ for all $x\in \mathbb R^d$ and $W(x)\rightarrow\infty$ as $\|x\|\rightarrow\infty$.
\item[(ii)] There exists a positive integer, say $M$, such that
\[
h(x)\cdot\nabla W(x) < 0 \text{ whenever } W(x) \geq M.
\]
\end{itemize}
\end{itemize}

We next define a generalization of the iteration scheme~(\ref{SA}).
First, choose a positive integer $N$, with $M<N\leq\infty$, such
that there is a finite positive constant $c_N$ satisfying
\begin{equation}
\label{c}
 c_N > 1 \bigvee \left(\sup_{y\in \bar H^N\backslash
H^M}\frac{\|h(y)\|^2 + f(y)}{W(y)}\right).
\end{equation}
At least for finite $N$, assumptions \textbf{(A1)} and
\textbf{(A3)}(ii) guarantee such a choice for $c_N$. Having chosen a
suitable $N$, choose a locally bounded measurable function
$g(\cdot):\mathbb R^d\rightarrow \mathbb R$ such that
\begin{equation}
\label{g}
 g(y)> 1 \bigvee \left(\mathbb I\left\{W(y)>N\right\}\sqrt{\frac{\|h(y)\|^2 +
f(y)}{W(y)}}\right).
\end{equation}
Again, assumptions \textbf{(A1)} and \textbf{(A3)}(ii) guarantee
such a choice for $g(\cdot)$. We thus have, for some suitable $N$,
possibly infinite, the following inequality
\begin{equation}
\label{wgc} c_N W(y) > \frac{\|h(y)\|^2 + f(y)}{g(y)^2}\text{ if }
W(y)\geq M.
\end{equation}
Having chosen $g(\cdot)$, consider the iterates $\{y_n\}$ generated
by
\begin{equation}
y_{n+1} = y_{n} + a^\omega(n)[h(y_n)+M_{n+1}], \label{MSA}
\end{equation}
where
\begin{equation}
\label{newstep} a^\omega(n):= a(n)/g(y_n).
\end{equation}
This is a generalization of the original iteration scheme~(\ref{SA})
since the step size $a^\omega(n)$ is now an $\mathcal
F_n$-measurable random step size. We note that by our choice
\begin{itemize}
\item $g(\cdot)$ is a locally bounded function, and
\item $g(y)\geq 1$ for all $y\in \mathbb R^d$.
\end{itemize}

\begin{remark}
\label{remark} By choosing $N$ large enough we can ensure $g(y)=1$
for $y$ in an arbitrarily large sphere around the origin. If
$c_\infty<\infty$, we can choose $N=\infty$, in which case $g(y)=1$
for all $y \in \mathbb R^d$ and we recover the original
scheme~(\ref{SA}).
\end{remark}

\begin{remark}
\label{randomstep}
 Since $g(y)\geq 1$ for all $y\in \mathbb R^d$, it
follows from assumption \textbf{(A2)}(ii) that the random step
sizes satisfy
\[
\sum_n a^\omega(n)^2<\infty \text{ a.s. }
\]
\end{remark}

\section{A test for stability}

Let $m$ be an arbitrary positive integer, $m>M$. Define the level
set
\[
H^m := \{x:W(x)< m\},
\]
and let $\bar H^m$ denote the closure of $H^m$. Since
$h(x)\cdot\nabla W(x) < 0$ whenever $W(x) \geq M$, we get
\[
\dot{W}(x):=h(x)\cdot\nabla W(x) < 0 \text{ for } x\in \bar
H^m\backslash H^M.
\]

As $\bar H^m\backslash H^M$ is a compact set, and $\dot{W}(\cdot)$
is a continuous function, there must exist a negative constant $c$
such that
\begin{equation}
\sup_{x\in \bar H^m\backslash H^M}\dot{W}(x) \leq c < 0. \label{W
dot}
\end{equation}

Fix some $T>0$. Note that \textbf{(A1)} and \textbf{(A4)} ensure the
well-posedness of the o.d.e. given by~(\ref{ode}) for $t \geq 0$.
Let $y^u(t)$ be the o.d.e. trajectory starting from $u$. Thus, $\dot
y^u(t)= h(y^u(t))$ for $t \geq 0$, and $y^u(0) = u$. Choose a
positive but arbitrarily small $\epsilon_m$ satisfying
\[
\epsilon_m \leq 1 \wedge \inf \left\{\left|W(u)- W(v)\right|: u,v\in
\bar H^m\backslash H^M \text{ and } v=x^u(T)\right\}.
\]
Note that $\epsilon_m>0$ is possible because of~(\ref{W dot}). Given
$\epsilon_m$, choose a positive but arbitrarily small $\delta_m$
such that:
\[
\text{ if } u,v\in \bar H^m \ \text{and} \ \|u-v\|<\delta_m, \
\text{ then } \ |W(u)-W(v)|<\epsilon_m/2.
\]
Note that $\delta_m>0$ is possible  because $W(\cdot)$ is a
continuous function and $\bar H^m$ is a compact set.
\begin{remark}
\label{arbitrarilysmall} The fact that both $\epsilon_m$ and
$\delta_m$ can be chosen positive but arbitrarily small will prove
crucial later.
\end{remark}

Let $n_0 \geq 0$. Given $n_i(\omega)$, define $n_{i+1}(\omega)$ as
\[
n_{i+1}(\omega):=\inf\left\{n>n_i(\omega):\sum_{n_i(\omega)}^n
a^\omega(i)\geq T \right\}.
\]

Consider the $\delta_m$-neighbourhood of $H^m$,
\[
N^{\delta_m}(H^m) :=\left\{x:\inf_{y\in H^m}
\|x-y\|<\delta_m\right\}.
\]
Note that $\mathbb I\left\{y_n \in N^{\delta_m}
\left(H^m\right)\right\}a^\omega(n)M_{n+1}$ is a martingale
difference term. Since $N^{\delta_m} \left(H^m\right)$ is a bounded
set, and $f(\cdot)$ is locally bounded, it follows from assumption
\textbf{(A3)}(ii) and Remark~\ref{randomstep} that
 \begin{eqnarray}
\nonumber &&\sum_n \mathbb E\left[\left(\|\mathbb I\left\{y_n \in
N^{\delta_m} \left(H^m\right)\right\}a^\omega(n)M_{n+1}
\|\right)^2|\mathcal
F_n\right] \\
\nonumber &\leq& \left(\sup_{y \in N^{\delta_m}
\left(H^m\right)}f(\|y\|)\right)\times \sum_n a^\omega(n)^2\\
\nonumber &<& \infty \text{ a.s.} \label{equation}\\
\end{eqnarray}
This leads to:
\begin{lemma}
\label{lemma1} Assume \textbf{(A2)}--\textbf{(A4)}. For any positive
integer $m>M$ we have:
\[
\sum_i \mathbb I\left\{y_n \in N^{\delta_m}
\left(H^m\right)\right\}a^\omega(n)M_{n+1} \text{ converges a.s.}
\]
\end{lemma}
\begin{proof}
This is immediate from (\ref{equation}) and the convergence theorem
for square-integrable martingales, Theorem 3.3.4, p.\ 53,  of
\cite{borkprob}.
\end{proof}

From Lemma~\ref{lemma1} it follows that almost surely there exists
an $N(\omega, m)$ such that if $n_0(\omega)\geq N(\omega, m)$ then
\begin{equation}
\sup_q\left\|\sum_{n_0(\omega)}^q\mathbb I\left\{y_n \in
N^{\delta_m} \left(H^m\right)\right\} a^\omega(n)M_{n+1}\right\| <
\frac{\delta_m}{2\exp{(KT)}}.\label{constrained convergence}
\end{equation}

\begin{remark}
\label{convergenceremark} Note that Lemma~\ref{lemma1} guarantees
the convergence of the martingale $\sum_i \mathbb
I\{\cdots\}a^\omega(n)M_{n+1}$ while not saying anything about
$\sum_n a^\omega(n)$.  Since the martingale converges, there must
exist an $N(\omega, m)$ satisfying~(\ref{constrained convergence})
even if $\sum_n a^\omega(n)<\infty$. That $\sum_n
a^\omega(n)=\infty$ a.s.\ needs a proof. In what follows, we give a
sufficient condition for stability and show that it is also
sufficient for $\sum_n a^\omega(n)=\infty$ a.s.
\end{remark}

Let $K$ be the Lipschitz constant of $h(\cdot)$ on
$N^{\delta_m}\left(H^m\right)$. Without loss of generality we assume
that $N(\omega, m)$ is large enough that if $n_0(\omega) \geq
N(\omega, m)$. Then
\begin{equation}
K\left(\sup_{y\in N^{\delta_m}
\left(H^m\right)}\|h(y)\|\right)\left(\sum_{n_0(\omega)}^\infty
a^\omega(n)^2\right)<\frac{\delta_m}{2\exp{(KT)}}. \label{tail}
\end{equation}

\begin{lemma}
Assume \textbf{(A1)}--\textbf{(A4)}. Let $m$ be a positive integer
with $m > M$. Let $n_0(\omega)$ satisfy $n_0(\omega)\geq N(\omega,
m) $. Under this base condition, the following inductive step holds:
if $n_i(\omega)<\infty$ and $y_{n_i}(\omega) \in H^m$, then
\begin{enumerate}
\item  $y_j(\omega)\in N^{\delta_m}\left(H^m\right)$ a.s. for $n_i(\omega) \leq j \leq n_{i+1}(\omega)$,
\item $n_{i+1}(\omega)<\infty$ a.s., and
\item Almost surely, either
\begin{itemize}
\item $W\left(y_{n_{(i+1)}}(\omega)\right)<W\left(y_{n_i}(\omega)\right)-\frac{\epsilon_m} 2$, or
\item $y_{n_{(i+1)}}(\omega)\in N^{\delta_m}(H^M)$.
\end{itemize}
In particular, in either case, $y_{n_{(i+1)}}(\omega) \in H^m$ a.s.
\end{enumerate}
\label{bounded}
\end{lemma}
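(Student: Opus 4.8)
The plan is to transport the descent of $W$ along the o.d.e.\ (\ref{ode}) to the rescaled iterates (\ref{MSA}) over the window $[n_i(\omega),n_{i+1}(\omega)]$, by showing that on this window the iterates shadow the o.d.e.\ trajectory started at $y_{n_i}(\omega)$ to within $\delta_m$. Introduce the rescaled clock $t(n):=\sum_{k=n_i(\omega)}^{n-1}a^\omega(k)$, so $t(n_i)=0$, and let $\phi(\cdot)$ be the solution of (\ref{ode}) with $\phi(0)=y_{n_i}(\omega)$. Writing $z_n:=y_n-\phi(t(n))$, a one-step expansion of (\ref{MSA}) against $\phi(t(n+1))-\phi(t(n))=\int_{t(n)}^{t(n+1)}h(\phi(s))\,ds$ decomposes $z_{n+1}-z_n$ into a Lipschitz term $a^\omega(n)[h(y_n)-h(\phi(t(n)))]$, the noise term $a^\omega(n)M_{n+1}$, and a discretization term $a^\omega(n)h(\phi(t(n)))-\int_{t(n)}^{t(n+1)}h(\phi(s))\,ds$. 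On $N^{\delta_m}(H^m)$ the first is bounded by $Ka^\omega(n)\|z_n\|$, the third by $K(\sup_{N^{\delta_m}(H^m)}\|h\|)a^\omega(n)^2$, and the partial sums of the second are controlled by (\ref{constrained convergence}). I would also record at the outset that, since $\dot W<0$ on $\{W\ge M\}$ by \textbf{(A4)}(ii), both $\bar H^m$ and $\bar H^M$ are forward invariant for (\ref{ode}): on $\{W=m\}$ and on $\{W=M\}$ the field points strictly inward.

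The core step, and the one I expect to be the main obstacle, is claim~1: a discrete Gronwall bootstrap showing $\|z_j\|<\delta_m$ for $n_i\le j\le n_{i+1}$. The difficulty is that the Lipschitz bound $Ka^\omega(n)\|z_n\|$ is only available while the iterates lie in $N^{\delta_m}(H^m)$, which is precisely what is to be proved, so the argument must be self-referential. I would run an induction on $j$: assuming $y_k\in N^{\delta_m}(H^m)$ for $n_i\le k<j$, the three bounds above and the discrete Gronwall lemma give $\|z_j\|\le\big(\text{noise}+\text{discretization}\big)\exp(Kt(j))$; since $t(j)\le T$ forces $\exp(Kt(j))\le\exp(KT)$, while (\ref{constrained convergence}) and (\ref{tail}) each bound their contribution by $\delta_m/(2\exp(KT))$, we obtain $\|z_j\|<\delta_m$. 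As $\phi(t(j))\in\bar H^m$ by invariance, this yields $y_j\in N^{\delta_m}(H^m)$ and closes the induction. (The only fussy point is that $t(n_{i+1})$ slightly exceeds $T$; this is absorbed by taking $N(\omega,m)$ large enough that the overshoot $a^\omega(n_{i+1}-1)$ is negligible, the inequalities in (\ref{constrained convergence}) and (\ref{tail}) being strict.)

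Claim~2 then resolves the circularity flagged in Remark~\ref{convergenceremark}. Since $N^{\delta_m}(H^m)$ is bounded and $g(\cdot)$ is locally bounded, $g\le G_m<\infty$ there, so $a^\omega(n)=a(n)/g(y_n)\ge a(n)/G_m$ at every index where $y_n$ lies in the neighbourhood. If $n_{i+1}(\omega)=\infty$ then claim~1 applies for all $j\ge n_i$ (the clock never reaches $T$, so the bootstrap never stops), whence $\sum_{n_i}^\infty a^\omega(n)\ge G_m^{-1}\sum_{n_i}^\infty a(n)=\infty$ by \textbf{(A2)}(i), contradicting $t(n)<T$ for all $n$. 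Hence $n_{i+1}(\omega)<\infty$, and with it claim~1 as stated.

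Finally I would prove claim~3 by transporting the o.d.e.\ dichotomy through the shadowing. Looking at $\phi$ on $[0,t(n_{i+1})]$: if $W(\phi(t))\ge M$ throughout, then $\phi$ starts and ends in the shell $\bar H^m\setminus H^M$, so the choice of $\epsilon_m$ via (\ref{W dot}) gives $W(\phi(t(n_{i+1})))\le W(\phi(T))\le W(y_{n_i})-\epsilon_m$; combined with $\|z_{n_{i+1}}\|<\delta_m$ and the defining property of $\delta_m$, this yields $W(y_{n_{i+1}})<W(y_{n_i})-\epsilon_m/2$. Otherwise $W(\phi(\tau))<M$ for some $\tau\le t(n_{i+1})$, and forward invariance of $\bar H^M$ puts $\phi(t(n_{i+1}))\in\bar H^M$, so $y_{n_{i+1}}\in N^{\delta_m}(H^M)$. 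In the first case $y_{n_{i+1}}\in H^m$ is immediate, and in the second it follows because $\delta_m\le1$ and the fact that $m>M$ are integers force $N^{\delta_m}(H^M)\subset H^m$; this gives the concluding assertion. The roles of (\ref{wgc}) and $g(\cdot)$ enter only to keep the per-step increments $\|y_{n+1}-y_n\|$ scaled down enough that a single step from inside $H^m$ cannot overshoot the $\delta_m$-neighbourhood, which is what makes the bootstrap of claim~1 legitimate near the boundary $\{W=m\}$.
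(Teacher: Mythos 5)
Your proposal is correct and follows essentially the same route as the paper's proof: the same self-referential Gronwall/shadowing bootstrap for claim 1 (the paper packages your three-term decomposition of $y_j-\phi(t(j))$ as the cited standard Gronwall estimate, Lemma 2.1 of \cite{borkar}, with the noise and discretization contributions each bounded by $\delta_m/(2\exp(KT))$ via (\ref{constrained convergence}) and (\ref{tail})), the same contradiction via local boundedness of $g(\cdot)$ and \textbf{(A2)}(i) for claim 2, and the same two-case dichotomy on where the o.d.e.\ trajectory ends up ($\bar H^m\setminus H^M$ versus $H^M$) for claim 3, with the final inclusion $y_{n_{(i+1)}}\in H^m$ obtained from $N^{\delta_m}(H^M)\subset H^{M+\epsilon_m/2}\subset H^m$ exactly as you indicate (via the defining property of $\delta_m$ and $\epsilon_m\leq 1$, rather than "$\delta_m\leq 1$" as you wrote, a harmless slip).
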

\begin{proof}
We first show by induction that $y_j(\omega)\in
N^{\delta_m}\left(H^m\right)$ for $n_i(\omega) \leq j \leq
n_{i+1}(\omega)$. By assumption, $y_{n_i}(\omega) \in H^m \subset
N^{\delta_m}\left(H^m\right)$. Fix $j$ in the range $n_i(\omega) < j
\leq n_{i+1}(\omega)$. Assume $y_k(\omega)\in
N^{\delta_m}\left(H^m\right)$ for $n_i(\omega) \leq k \leq j-1$. We
need to show that $y_j(\omega)\in N^{\delta_m} \left(H^m\right)$.
 If
\begin{equation}
K\left(\sup_{n_i\leq k\leq
j-1}\|h(y_k)\|\right)\left(\sum_{n_i(\omega)}^{j-1}a^\omega(n)^2\right)<\frac{\delta_m}{2\exp{(KT)}},
\label{first condition}
\end{equation}
and
\begin{equation}
\sup_{n_i\leq k\leq j-1}\left\|\sum_{n_i(\omega)}^{k}
a^\omega(n)M_{n+1}\right\|<\frac{\delta_m}{2\exp{(KT)}},
\label{second condition}
\end{equation}
then by a standard application of the Gronwall inequality (see,
e.g., Lemma~2.1 in \cite{borkar}) $y_j(\omega)$ will satisfy
\begin{equation}
\left\|y_j(\omega)-y^{y_{n_i}(\omega)}\left(\sum_{n_i(\omega)}^{j-1}a^\omega(n)\right)\right\|<\delta_m.
\label{deviation}
\end{equation}
From the assumption that $n_i(\omega)\geq N(\omega, m)$ it follows
that (\ref{constrained convergence}) and (\ref{tail}) hold. These
equations, coupled with the assumption that $y_k(\omega)\in
N^{\delta_m}\left(H^m\right)$ for $n_i(\omega) \leq k \leq j-1$,
imply (\ref{first condition}) and (\ref{second condition}), which in
turn imply (\ref{deviation}). Since the o.d.e. trajectory will
always be in $H^m$ if it starts there, (\ref{deviation}) implies
\[
y_j(\omega)\in N^{\delta_m} \left(H^m\right).
\]
Induction now proves the first claim.\\

For the second claim we give a proof by contradiction. Consequently,
assume that $n_{i+1}(\omega)=\infty$. The first claim, which has
already been proved, now gives $y_j(\omega)\in
N^{\delta_m}\left(H^m\right)$ a.s. for $n_i(\omega) \leq j \leq
\infty$. Therefore, since $g(\cdot)$ is a locally bounded function,
we get $\sup_{j \geq n_i(\omega)} g(y_j(\omega))<\infty$. By
assumption \textbf{(A2)}(i) this gives
\[
\sum_{j = n_i(\omega)}^{\infty}a^\omega(j)\geq \frac{\sum_{j =
n_i(\omega)}^{\infty}a(j)}{\sup_{j \geq n_i(\omega)} g(y_j(\omega))}
= \infty.
\]
Since $n_{i+1}(\omega)=\infty$ requires $\sum_{j =
n_i(\omega)}^{\infty}a^\omega(j)\leq T$, we get the required
contradiction. Thus $n_{i+1}(\omega)<\infty$ a.s.\\

We turn to the final claim. Let
$z=y^{y_{n_i}(\omega)}\left(\sum_{n_i}^{n_{(i+1)}-1}a^\omega(n)\right)$,
the o.d.e. trajectory after time $\approx T$ starting from
$y_{n_i}(\omega)$. Since the o.d.e. starts in $H^m$, it remains in
$H^m$. There are two cases to consider.
\begin{itemize}
\item If $z \in H^m\backslash H^M$, the definition of $\epsilon_m$ implies that $W(z)
\leq W\left(y_{n_i}(\omega)\right) - \epsilon_m$.
Since~(\ref{deviation}) holds for $j=n_{(i+1)}(\omega)$, we have
$\|y_{n_{(i+1)}}(\omega) - z\|<\delta_m$. From the definition of
$\delta_m$ it follows that
$W\left(y_{n_{(i+1)}}(\omega)\right)<W(z)+\epsilon_m/2$. We get
$W\left(y_{n_{(i+1)}}(\omega)\right)<W(y_{n_i}(\omega))-\frac
{\epsilon_m} 2$. In particular, $y_{n_{(i+1)}}(\omega)\in H^m$.
\item  If $z\in H^M$, then, since $\|y_{n_{(i+1)}}(\omega) - z\|<\delta_m$, we get $y_{n_{(i+1)}}(\omega)\in N^{\delta_m}(H^M)$.
Since $N^{\delta_m}(H^M)\subset H^{M+\frac {\epsilon_m} 2}$ and
$m>M+1/2$, we get $y_{n_{(i+1)}}\in H^m$.
\end{itemize}
The proof is complete.
\end{proof}

Define the stopping times
\[
\tau_k^m(\omega) := \inf\{n\geq k: W(y_n(\omega))<m\}.
\]
The next result establishes the fact that if $W(y_n(\omega))<m$ for
infinitely many $n$, then almost surely the iterates converge to
$\bar H^M$.

\begin{proposition}
Assume \textbf{(A1)}--\textbf{(A4)}. For any arbitrary $m> M$, if
$\tau_k^m(\omega)<\infty$ for all $k$, then $y_n(\omega) \rightarrow
\bar H^M$ a.s. \label{convergence}
\end{proposition}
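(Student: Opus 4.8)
The plan is to fix $\omega$ in the almost-sure event on which the constrained martingale estimate~(\ref{constrained convergence}) and the tail bound~(\ref{tail}) hold (for all the parameter choices introduced below) and which also lies in $\{\tau_k^m(\omega)<\infty \text{ for all } k\}$, and then to drive the iterates into an arbitrarily small $W$-sublevel set by iterating Lemma~\ref{bounded}. First I would use the hypothesis: since $\tau_k^m(\omega)<\infty$ for every $k$, there is an index $n_0(\omega)\geq N(\omega,m)$ with $y_{n_0}(\omega)\in H^m$. Feeding this into Lemma~\ref{bounded} and applying it inductively (claim~2 keeps each $n_{i+1}(\omega)$ finite, claim~3 keeps $y_{n_{i+1}}(\omega)\in H^m$) produces an infinite increasing sequence $n_0<n_1<n_2<\cdots$ with $y_{n_i}(\omega)\in H^m$ for every $i$, and with $y_j(\omega)\in N^{\delta_m}(H^m)$ for all $j\geq n_0(\omega)$.

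The engine of the argument is then a descent-then-trap statement along the subsequence $\{y_{n_i}\}$. By claim~3, at each step either $W(y_{n_{i+1}})<W(y_{n_i})-\epsilon_m/2$ or $y_{n_{i+1}}(\omega)\in N^{\delta_m}(H^M)$. Since $W\geq 0$, the first alternative cannot recur at every step from some index onward, so the second alternative occurs at some index $i_1$. I would next show that $N^{\delta_m}(H^M)$ is absorbing along the subsequence: if $y_{n_i}(\omega)\in N^{\delta_m}(H^M)\subset H^{M+\epsilon_m/2}$, then whichever alternative of claim~3 applies next returns a point of $N^{\delta_m}(H^M)$ --- the descent alternative gives $W(y_{n_{i+1}})<W(y_{n_i})-\epsilon_m/2<M$, hence $y_{n_{i+1}}\in H^M\subset N^{\delta_m}(H^M)$, while the other alternative lands in $N^{\delta_m}(H^M)$ directly. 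By induction $y_{n_i}(\omega)\in N^{\delta_m}(H^M)$ for all $i\geq i_1+1$.

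To pass from the subsequence to all indices I would invoke the Gronwall estimate~(\ref{deviation}), which holds for each $j$ in the range $[n_i(\omega),n_{i+1}(\omega)]$. For $i\geq i_1+1$ we have $y_{n_i}(\omega)\in H^{M+\epsilon_m/2}$, and because $h\cdot\nabla W<0$ on $\{W\geq M\}$ (assumption \textbf{(A4)}(ii)) the o.d.e.\ trajectory started at $y_{n_i}(\omega)$ can never cross the level $M$ upward and so stays inside $H^{M+\epsilon_m/2}$. Combining this with~(\ref{deviation}) and the modulus-of-continuity that defines $\delta_m$ gives $W(y_j(\omega))<M+\epsilon_m$ for every intermediate $j$, whence $\limsup_n W(y_n(\omega))\leq M+\epsilon_m$.

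Finally I would remove the residual $\epsilon_m$, which is the delicate point and exactly what Remark~\ref{arbitrarilysmall} is for. The realized iterates $\{y_n\}$ depend only on the fixed data $g(\cdot)$ and $N$, not on the analysis parameters $\epsilon_m,\delta_m$; hence the whole argument above may be rerun, for the \emph{same} sequence $\{y_n\}$, along a sequence of admissible choices $\epsilon_m^{(\ell)}\downarrow 0$ (each valid on its own almost-sure event, and the countable intersection is again almost sure). This yields $\limsup_n W(y_n(\omega))\leq M$. Since $W$ is proper by \textbf{(A4)}(i), the iterates eventually lie in a bounded set, so every subsequence has a limit point $x^*$ with $W(x^*)\leq M$; therefore $\mathrm{dist}(y_n(\omega),\bar H^M)\to 0$, which is the asserted convergence. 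The main obstacles I expect are the two final steps: the Gronwall control of the off-subsequence iterates, and genuinely reaching $\bar H^M$ rather than merely a fixed neighborhood of it, for which the arbitrary smallness of $\epsilon_m$ and $\delta_m$ is essential.
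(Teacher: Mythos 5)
Your proposal is correct and follows essentially the same route as the paper's own proof: locate $n_0\geq N(\omega,m)$ with $y_{n_0}\in H^m$ using the hypothesis, iterate Lemma~\ref{bounded} to get the descent-or-trap dichotomy along $\{y_{n_i}\}$, show $N^{\delta_m}(H^M)$ absorbs the subsequence, control the intermediate iterates via the Gronwall estimate, and finally let $\epsilon_m,\delta_m\downarrow 0$ as licensed by Remark~\ref{arbitrarilysmall}. Your write-up is in fact somewhat more explicit than the paper's at the two delicate points you flag (the absorption induction and the countable intersection over $\epsilon_m^{(\ell)}\downarrow 0$), but the underlying argument is the same.
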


\begin{proof}
Assume $\tau_k^m(\omega)<\infty$ for all $k$. From the definition of
$\tau_k^m$ this implies that given any $k$ there exists an $n$ with
$n\geq k$ such that $y_n(\omega)\in H^m$. In other words, the
iterates are in $H^m$ infinitely often. By
Remark~\ref{convergenceremark} there exists an  $N(\omega, m)$
satisfying~(\ref{constrained convergence}). Since the iterates are
in $H^m$ infinitely often there exists an $n_0 > N(\omega, m)$ such
that $y_{n_0} \in H^m$. From Lemma~\ref{bounded}
 we know that if $n_i(\omega)<\infty$ then almost surely $n_{i+1}(\omega)<\infty$.
 By induction it follows that $n_i(\omega)<\infty$ a.s. for all $i\in \mathbb Z^+$.
 Invoking Lemma~\ref{bounded} again, we get that either
$W\left(y_{n_{(i+1)}}(\omega)\right)<W\left(y_{n_i}(\omega)\right)-\frac
{\epsilon_m} 2$, or $y_{n_{(i+1)}}(\omega)\in N^{\delta_m}(H^M)$.
Since $W(\cdot)$ cannot keep decreasing by $\epsilon_m/2$ forever,
it follows that for some $i$, $y_{n_i}(\omega)\in
N^{\delta_m}(H^M)$. Note that $N^{\delta_m}(H^M)\subset H^{M+\frac
{\epsilon_m} 2}$. Consequently, if $y_{n_i}(\omega)\in
N^{\delta_m}(H^M)$ then $W\left(y_{n_i}(\omega)\right) < M + \frac
{\epsilon_m} 2$ and so $y_{n_{(i+1)}}(\omega)\in N^{\delta_m}(H^M)$.
It follows that the iterates $y_{n_i}(\omega)$ will eventually get
trapped in $N^{\delta_m}(H^M)$. Once the iterates $y_{n_i}(\omega)$
are trapped in $N^{\delta_m}(H^M)\subset H^{M+\frac {\epsilon_m}
2}$, the o.d.e. starting from $y_{n_i}(\omega)$ will remain in
$H^{M+\frac {\epsilon_m} 2}$. It follows that once the iterates
$y_{n_i}(\omega)$  are trapped in $N^{\delta_m}(H^M)$, the
intermediate iterates $y_j(\omega)$, $n_i(\omega) < j <
n_{(i+1)}(\omega)$ will get trapped in $N^{\delta_m}\left(H^{M+\frac
{\epsilon_m} 2}\right)$. Since both $\epsilon_m$ and $\delta_m$ were
chosen arbitrarily small positive quantities (see
Remark~\ref{arbitrarilysmall}), the result follows.
\end{proof}

Consider two statements of stability: first
\begin{equation}
\label{gst} y_n(\omega)\rightarrow \bar H^M \text{ a.s. }
\end{equation}
and second, for every positive integer $k\geq 0$,
\begin{equation}
\label{wst} y_{n\wedge\tau_k^M}(\omega)\rightarrow \bar H^M \text{
a.s. }
\end{equation}
The next result establishes the equivalence of the two stability
statements.

\begin{lemma}
\label{equivalent} Under assumptions \textbf{(A1)}--\textbf{(A4)},
the two stability statements~(\ref{gst}) and~(\ref{wst}) are
equivalent.
\end{lemma}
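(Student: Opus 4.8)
The plan is to prove the two implications separately. The forward direction (\ref{gst})$\Rightarrow$(\ref{wst}) will be immediate, while the converse carries all the content and rests on Proposition~\ref{convergence}. The organizing observation is that the stopped iterate $y_{n\wedge\tau_k^M}$ coincides with $y_n$ exactly on the event $\{\tau_k^M=\infty\}$, that is, exactly when the iterates abandon $H^M$ permanently; this is complementary to the regime in which the iterates return to $H^M$ infinitely often, which (since $M<m$ gives $H^M\subset H^m$) is subsumed by the hypothesis of Proposition~\ref{convergence}.

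For (\ref{gst})$\Rightarrow$(\ref{wst}), I would fix $k$ and argue $\omega$ by $\omega$ on the almost sure event where (\ref{gst}) holds. If $\tau_k^M(\omega)=\infty$ then $n\wedge\tau_k^M(\omega)=n$ for every $n$, so $y_{n\wedge\tau_k^M}(\omega)=y_n(\omega)\to\bar H^M$ by hypothesis. If instead $\tau_k^M(\omega)<\infty$, then for $n\geq\tau_k^M(\omega)$ the stopped iterate is frozen at $y_{\tau_k^M}(\omega)\in H^M\subset\bar H^M$, so the stopped sequence is eventually constant and contained in $\bar H^M$ and hence converges to $\bar H^M$. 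This settles the first implication.

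For the converse I would first intersect the almost sure events on which (\ref{wst}) holds for each $k$ (a countable intersection, hence almost sure) with the almost sure event underlying Proposition~\ref{convergence}, and then argue deterministically on the resulting set via the dichotomy: either the iterates visit $H^M$ infinitely often, or they do not. In the first case $\tau_k^M(\omega)<\infty$ for every $k$, and since $H^M\subset H^m$ this forces $\tau_k^m(\omega)<\infty$ for every $k$, so Proposition~\ref{convergence} yields $y_n(\omega)\to\bar H^M$ at once. In the second case there is some $k_0$ with $\tau_{k_0}^M(\omega)=\infty$, whence $y_{n\wedge\tau_{k_0}^M}(\omega)=y_n(\omega)$ for all $n$, and (\ref{wst}) applied with $k=k_0$ gives $y_n(\omega)\to\bar H^M$. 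Either way (\ref{gst}) holds.

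Once Proposition~\ref{convergence} is in hand the argument is essentially bookkeeping, and I expect the only delicate point to be the handling of the almost sure quantifiers in the converse. Because both Proposition~\ref{convergence} and each instance of (\ref{wst}) are ``almost sure'' assertions, I must pass to a single almost sure event on which all of them hold simultaneously before running the deterministic case split, so that the dichotomy applies pointwise without reintroducing null sets. I expect this to be the main, and essentially the only, subtlety.
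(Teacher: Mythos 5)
Your proof is correct and follows essentially the same route as the paper: the forward direction is immediate, and the converse hinges on Proposition~\ref{convergence} together with the observation that $y_{n\wedge\tau_k^M}=y_n$ on $\{\tau_k^M=\infty\}$. The only difference is presentational — you run a direct dichotomy (visits to $H^M$ infinitely often vs.\ not) where the paper argues by contradiction — and your explicit care with intersecting the countably many almost-sure events is if anything slightly tidier than the paper's own write-up.
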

\begin{proof}
Clearly~(\ref{gst}) implies~(\ref{wst}). For the converse,
assume~(\ref{wst}). We need to show that
\[
\mathbb P\left[y_{n\wedge\tau_k^M}(\omega)\rightarrow \bar H^M \
\forall k \text{ and } y_n(\omega)\not\rightarrow \bar H^M
\right]=0.
\]
Fix an $m>M$. Let $\omega$ be such that
$y_{n\wedge\tau_k^M}(\omega)\rightarrow \bar H^M \ \forall k$ and
$y_n(\omega)\not\rightarrow \bar H^M$. Since
$y_n(\omega)\not\rightarrow \bar H^M$, by
Proposition~\ref{convergence} there exists a $k$ such that
$\tau_k^m(\omega)=\infty$ a.s. For this choice of $k$, since $m>M$,
it follows that $\tau_k^M(\omega)=\infty$ a.s. Thus for this $k$,
$y_{n\wedge\tau_k^M}(\omega)\rightarrow \bar H^M$ reduces to
$y_n(\omega)\rightarrow \bar H^M$ a.s. The result follows.
\end{proof}

On the basis of Lemma~\ref{equivalent}, we get the following test
for stability: for every $k$, if
$y_{n\wedge\tau_k^M}(\omega)\rightarrow \bar H^M$ a.s. then $\sup_n
\|y_n\|< \infty$ a.s. Note that it
does not require the o.d.e. to descend the Lyapunov function
inside the arbitrarily large set $H^M$. In the next section we give
a sufficient condition for this stability test.

\section{A sufficient condition for stability}

In this section we show that assumption \textbf{(A5)} below is
sufficient for stability.
\begin{itemize}
\item[\textbf{(A5)}] Let the $W(\cdot)$ of \textbf{(A4)} be twice continuously differentiable such that
all second order derivatives of $W(\cdot)$ are bounded in absolute
value by a constant.
\end{itemize}

We start with a few lemmas.

\begin{lemma}
\label{sufficient} Assume \textbf{(A1)}--\textbf{(A5)}. For any
positive integer $k$, and for any $\mathcal F_k$-measurable set $A$,
if $\mathbb E[W\left(y_k(\omega); A\right)]<\infty$ then
\[
\sup_{n\geq k}\mathbb
E\left[W\left(y_{n\wedge\tau_k^M}(\omega)\right); A\right]<\infty.
\]
\end{lemma}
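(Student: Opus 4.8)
The plan is to control the growth of the stopped Lyapunov values $W(y_{n\wedge\tau_k^M})$ by a Taylor‑expansion argument, converting the descent property \textbf{(A4)}(ii) together with the scaling inequality~(\ref{wgc}) into a one‑step supermartingale‑type estimate, and then to close a multiplicative recursion whose growth factor is summable by \textbf{(A2)}(ii). Write $\sigma:=\tau_k^M$, and note that $\sigma\geq k$, so in particular $k\wedge\sigma=k$. Fix $n\geq k$. Using \textbf{(A5)} I expand
\[
W(y_{n+1})=W(y_n)+a^\omega(n)\,\nabla W(y_n)\cdot[h(y_n)+M_{n+1}]+R_n ,
\]
where, since all second derivatives of $W$ are bounded in absolute value, the Taylor remainder satisfies $|R_n|\leq \tfrac{C}{2}\,a^\omega(n)^2\|h(y_n)+M_{n+1}\|^2$ for a constant $C$ depending only on that bound and the dimension $d$.

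Taking $\mathbb{E}[\,\cdot\mid\mathcal{F}_n]$ kills the martingale increment by \textbf{(A3)}(i), while \textbf{(A3)}(ii) gives $\mathbb{E}[\|h(y_n)+M_{n+1}\|^2\mid\mathcal{F}_n]\leq\|h(y_n)\|^2+f(y_n)$, so
\[
\mathbb{E}[W(y_{n+1})\mid\mathcal{F}_n]\leq W(y_n)+a^\omega(n)\,\dot W(y_n)+\tfrac{C}{2}a^\omega(n)^2\big(\|h(y_n)\|^2+f(y_n)\big).
\]
On the event $\{n<\sigma\}$ one has $W(y_n)\geq M$, whence $\dot W(y_n)<0$ by \textbf{(A4)}(ii), so the first‑order term is dropped; moreover, recalling $a^\omega(n)=a(n)/g(y_n)$ and applying~(\ref{wgc}), the remainder term is at most $\tfrac{C}{2}c_N\,a(n)^2W(y_n)$. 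Therefore, on $\{n<\sigma\}$,
\[
\mathbb{E}[W(y_{n+1})\mid\mathcal{F}_n]\leq\big(1+c\,a(n)^2\big)W(y_n),\qquad c:=\tfrac{C\,c_N}{2}.
\]

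To transfer this to the stopped process I use that $\{n<\sigma\}\in\mathcal{F}_n$ (as $\sigma$ is a stopping time) and the identity $W(y_{(n+1)\wedge\sigma})=W(y_{n\wedge\sigma})+\mathbb{I}\{n<\sigma\}\,[W(y_{n+1})-W(y_n)]$. Multiplying by $\mathbb{I}_A$, which is $\mathcal{F}_n$‑measurable because $A\in\mathcal{F}_k\subset\mathcal{F}_n$, and inserting the displayed bound gives $\mathbb{E}[W(y_{(n+1)\wedge\sigma})\mathbb{I}_A\mid\mathcal{F}_n]\leq(1+c\,a(n)^2)\,W(y_{n\wedge\sigma})\mathbb{I}_A$. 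Setting $u_n:=\mathbb{E}[W(y_{n\wedge\sigma});A]$ and taking expectations yields $u_{n+1}\leq(1+c\,a(n)^2)\,u_n$, hence $u_n\leq u_k\prod_{j=k}^{\infty}(1+c\,a(j)^2)\leq u_k\exp\big(c\sum_{j\geq k}a(j)^2\big)$. The base value is $u_k=\mathbb{E}[W(y_k);A]<\infty$ by hypothesis (using $k\wedge\sigma=k$), and the product is finite by \textbf{(A2)}(ii), giving the claimed $\sup_{n\geq k}u_n<\infty$.

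The main obstacle I anticipate is not the algebra but making every conditional expectation legitimate, since a priori $W(y_{n+1})$ need not be integrable over all of $\Omega$. The clean way around this is to exploit $W\geq 0$ throughout: all conditional expectations are then well‑defined in $[0,\infty]$, the vanishing of the martingale increment is valid because it is computed conditionally (with $\nabla W(y_n)$ an $\mathcal{F}_n$‑measurable factor), and the final passage from the conditional inequality to $u_{n+1}\leq(1+c\,a(n)^2)u_n$ is just Tonelli. Finiteness of the $u_n$ — so that the recursion is not the vacuous $\infty\leq\infty$ — then propagates by the same induction from the finite base case $u_k<\infty$.
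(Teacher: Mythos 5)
Your proposal is correct and follows essentially the same route as the paper: a second-order Taylor expansion of $W$, dropping the drift term via \textbf{(A4)}(ii) on $\{n<\tau_k^M\}$, killing the martingale increment conditionally, bounding the quadratic remainder by $c\,c_N a(n)^2 W(y_{n\wedge\tau_k^M})$ via~(\ref{wgc}), and closing the multiplicative recursion using \textbf{(A2)}(ii). Your extra care with the stopping identity and with the integrability of the conditional expectations (via $W\geq 0$ and Tonelli) only tightens details the paper leaves implicit.
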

\begin{proof}
We have
\[
y_{(n+1)\wedge\tau_k^M} = y_{n\wedge\tau_k^M} + a^\omega(n)\mathbb
 I\{\tau_k^M>n\}\left[h\left(y_{n\wedge\tau_k^M}\right) + M_{n+1}\right].
\]

Doing a Taylor expansion and using the fact that the second order
space derivatives of $W(\cdot)$ are bounded, we get
\begin{eqnarray}
\nonumber && W\left(y_{(n+1)\wedge\tau_k^M}\right)\\
\nonumber &\leq& W\left(y_{n\wedge\tau_k^M}\right) +
a^\omega(n)\mathbb I\{\tau_k^M>n\}\nabla
W\left(y_{n\wedge\tau_k^M}\right) \cdot\left[h\left(y_{n\wedge\tau_k^M}\right) + M_{n+1}\right]\\
\nonumber && \ + ca^\omega(n)^2
I\{\tau_k^M>n\}\left\|h\left(y_{n\wedge\tau_k^M}\right) +
M_{n+1}\right\|^2.
\end{eqnarray}
Since $\mathbb I\{\tau_k^M>n\}\nabla
W\left(y_{n\wedge\tau_k^M}\right) \cdot
h\left(y_{n\wedge\tau_k^M}\right)\leq 0$ and $\mathbb
E\left[h\left(y_{n\wedge\tau_k^M}\right)\cdot M_{n+1}|\mathcal
F_n\right]=0$, we get
\begin{eqnarray}
\nonumber && \mathbb
E\left[W\left(y_{(n+1)\wedge\tau_k^M}\right)|\mathcal F_n\right] \\
\nonumber &\leq& W\left(y_{n\wedge\tau_k^M}\right)+ ca^\omega(n)^2
\left(\mathbb E\left[\left.\mathbb
I\{\tau_k^M>n\}\left(\left\|h\left(y_{n\wedge\tau_k^M}\right)\right\|^2+\|M_{n+1}\|^2\right)\right|\mathcal
F_n\right]\right).
\end{eqnarray}

From~(\ref{wgc}) and the definition of $a^\omega(n)$, it follows
that
\begin{eqnarray}
\nonumber E\left[W\left(y_{(n+1)\wedge\tau_k^M}\right)|\mathcal F_n\right] &\leq& W\left(y_{n\wedge\tau_k^M}\right) + ca(n)^2\cdot c_NW\left(y_{n\wedge\tau_k^M}\right)\\
\nonumber &\leq& (1+ca(n)^2)W\left(y_{n\wedge\tau_k^M}\right)\\
\nonumber &\leq& \exp{(ca(n)^2)}W\left(y_{n\wedge\tau_k^M}\right)
\end{eqnarray}
For $n\geq k$, integrating gives
\[
E\left[W\left(y_{(n+1)\wedge\tau_k^M}\right); A\right] \leq
\exp{\left(c\sum_{i=k}^\infty a(i)^2\right)}\mathbb
E[W\left(y_k(\omega); A\right)] <\infty.
\]
The result follows.
\end{proof}

The next lemma is independent of assumption \textbf{(A5)} and
requires only assumptions \textbf{(A1)}--\textbf{(A4)} for its
proof.

\begin{lemma}
\label{lemmaforeveryk} Assume \textbf{(A1)}--\textbf{(A4)}. Let $k$
be an arbitrary positive integer. Let $A$ be an arbitrary $\mathcal
F_k$-measurable set. If
\[
\sup_{n\geq k}\mathbb
E\left[W(y_{n\wedge\tau_k^M}(\omega)); A\right]<\infty
\]
then
\[
\mathbb P \left[A \bigcap
\left(y_{n\wedge\tau_k^M}(\omega)\not\rightarrow \bar
H^M\right)\right]=0.
\]
\end{lemma}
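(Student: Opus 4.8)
The plan is to argue that, apart from a null set, the only way the stopped sequence can fail to reach $\bar H^M$ is for $W$ to blow up along the iterates, and then to rule this out with the hypothesized uniform bound via Fatou's lemma.

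First I would pin down the non-convergence event in terms of $\tau_k^M$. Whenever $\tau_k^M(\omega)<\infty$ we have $y_{n\wedge\tau_k^M}(\omega)=y_{\tau_k^M}(\omega)$ for all $n\geq\tau_k^M(\omega)$, and since $W(y_{\tau_k^M})<M$ this frozen value lies in $H^M\subseteq\bar H^M$; thus the stopped sequence is eventually constant inside $\bar H^M$ and converges to it. Hence
\[
\left\{y_{n\wedge\tau_k^M}\not\rightarrow\bar H^M\right\}=\left\{\tau_k^M=\infty\right\}\cap\left\{y_n\not\rightarrow\bar H^M\right\},
\]
and on this event the stopped and unstopped iterates coincide, so $W(y_{n\wedge\tau_k^M})=W(y_n)$ for every $n$. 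Setting $B:=A\cap\{\tau_k^M=\infty\}\cap\{y_n\not\rightarrow\bar H^M\}$, it suffices to prove $\mathbb P[B]=0$.

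Next I would use Proposition~\ref{convergence} to force $W$ to grow on the bad event. Fix any integer $m>M$. The contrapositive of Proposition~\ref{convergence} gives, almost surely, that on $\{y_n\not\rightarrow\bar H^M\}$ there is some $k'$ with $\tau_{k'}^m=\infty$, i.e.\ $W(y_n)\geq m$ for all $n\geq k'$; in particular $\liminf_n W(y_n)\geq m$. Since $W(y_{n\wedge\tau_k^M})=W(y_n)$ on $B$, we obtain $\liminf_n W(y_{n\wedge\tau_k^M})\geq m$ a.s.\ on $B$. Writing $C:=\sup_{n\geq k}\mathbb E[W(y_{n\wedge\tau_k^M});A]<\infty$ for the finite bound furnished by the hypothesis, Fatou's lemma applied to the nonnegative integrands $W(y_{n\wedge\tau_k^M})\mathbb I_B$ yields
\[
m\,\mathbb P[B]\leq\mathbb E\!\left[\liminf_n W(y_{n\wedge\tau_k^M});B\right]\leq\liminf_n\mathbb E\!\left[W(y_{n\wedge\tau_k^M});B\right]\leq C,
\]
using $B\subseteq A$ in the last step. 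Thus $\mathbb P[B]\leq C/m$ for every integer $m>M$, and letting $m\to\infty$ gives $\mathbb P[B]=0$, as required.

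The step I expect to be the main obstacle is the bookkeeping around the stopping time, since Proposition~\ref{convergence} is a statement about the unstopped iterates $y_n$ and its conclusion can only be transferred to the stopped process on the event $\{\tau_k^M=\infty\}$ — which is precisely where the two processes agree. Once the non-convergence of the stopped sequence is correctly identified with a blow-up of $W$ on $\{\tau_k^M=\infty\}$, the uniform $L^1$-type control combines with Fatou to close the argument with no further estimates; the only minor care needed is that the almost-sure inclusion coming from Proposition~\ref{convergence} carries an $m$-dependent null set, which is harmless since $B$ does not depend on $m$.
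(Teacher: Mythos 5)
Your proof is correct, and it follows the same overall strategy as the paper: identify the non-convergence event of the stopped process with $\{\tau_k^M=\infty\}\cap\{y_n\not\rightarrow\bar H^M\}$, invoke Proposition~\ref{convergence} to conclude that on this event $W(y_n)$ eventually stays above any prescribed level $m>M$, and then play this off against the uniform bound on $\mathbb E[W(y_{n\wedge\tau_k^M});A]$. The only place you diverge is in how the contradiction is extracted: the paper writes the bad event as $\bigcup_l\{\tau_l^u=\infty\}$, uses the monotonicity $\{\tau_l^u=\infty\}\subset\{\tau_{l+1}^u=\infty\}$ to select a single deterministic time $L$ capturing at least half the probability, and then bounds $\mathbb E[W(y_{L\wedge\tau_k^M});A]\geq\frac u2\,\mathbb P[B]$ by a Markov-type estimate at that one time; you instead observe that $\liminf_n W(y_{n\wedge\tau_k^M})\geq m$ a.s.\ on $B$ and apply Fatou's lemma to get $m\,\mathbb P[B]\leq C$ directly. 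Your version is slightly cleaner --- it avoids both the selection of $L$ and the factor of $2$ --- while the paper's version makes the mechanism (a single time at which $W$ is large on a set of substantial measure) more explicit. Your closing remark about the $m$-dependent null set from Proposition~\ref{convergence} is the right thing to flag, and it is indeed harmless here since $B$ is fixed before $m$ is sent to infinity.
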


\begin{proof}
Assume $y_{n\wedge\tau_k^M}(\omega)\not\rightarrow \bar H^M$.
Clearly, this implies that $\tau_k^M(\omega)=\infty$ and so
$y_{n\wedge\tau_k^M}(\omega)=y_n(\omega)$. It follows that
$y_n(\omega)\not\rightarrow \bar H^M$. Now, let $u$ be an arbitrary
integer, $u>M$. By Proposition~\ref{convergence}, since
$y_n(\omega)\not\rightarrow \bar H^M$, there exists an integer $l$,
$l\geq k$,  such that $\tau_l^u(\omega)=\infty$ a.s. It follows that
\[
\left\{A\bigcap\left(y_{n\wedge\tau_k^M}(\omega)\not\rightarrow \bar
H^M\right)\right\} = \bigcup_l
\left\{A\bigcap\left(y_{n\wedge\tau_k^M}(\omega)\not\rightarrow \bar
H^M \text{ and } \tau_l^u(\omega)=\infty\right)\right\}.
\]
Since $\{\tau_l^u(\omega)=\infty\}\subset
\{\tau_{l+1}^u(\omega)=\infty\}$ for all $l$, it follows that there
exists a positive integer $L \geq k$ such that
\[
\mathbb
P\left[A\bigcap\left(y_{n\wedge\tau_k^M}(\omega)\not\rightarrow \bar
H^M \text{ and } \tau_L^u(\omega)=\infty\right)\right]> \frac 1 2
\times \mathbb
P\left[A\bigcap\left(y_{n\wedge\tau_k^M}(\omega)\not\rightarrow \bar
H^M\right)\right].
\]

Combining everything we get the following inequalities\\
\begin{minipage}{\linewidth}
\begin{eqnarray}
\nonumber && \sup_{n\geq k}\mathbb
E\left[W(y_{n\wedge\tau_k^M}(\omega)); A\right]\\
\nonumber &\geq& \mathbb
E\left[W(y_{L\wedge\tau_k^M}(\omega)); A\right]\\
\nonumber  &\geq& u \times \mathbb
P\left[A\bigcap\left(\tau_k^M(\omega)=\infty \text{ and } \tau_L^u(\omega)=\infty\right)\right]\\
\nonumber &\geq& u \times \mathbb
P\left[A\bigcap\left(y_{n\wedge\tau_k^M}(\omega)\not\rightarrow \bar
H^M
\text{ and } \tau_L^u(\omega)=\infty\right)\right]\\
\nonumber &\geq& \frac u 2 \times \mathbb
P\left[A\bigcap\left(y_{n\wedge\tau_k^M}(\omega)\not\rightarrow \bar
H^M\right)\right].
\end{eqnarray}
\end{minipage}
Since $u$ is arbitrary and $\sup_{n\geq k}\mathbb
E\left[W(y_{n\wedge\tau_k^M}(\omega)); A\right]<\infty$, the result
follows.
\end{proof}

\begin{lemma}
\label{stronglemma} Assume \textbf{(A1)}--\textbf{(A5)}. For $k$ an
arbitrary positive integer, we have
\[
\mathbb P \left[y_{n\wedge\tau_k^M}(\omega)\not\rightarrow \bar
H^M\right]=0.
\]
\end{lemma}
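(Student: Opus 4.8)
The plan is to deduce this unconditional statement from the two preceding lemmas by decomposing the sample space into a countable family of $\mathcal F_k$-measurable pieces on each of which $W(y_k)$ is integrable. The reason one cannot simply apply Lemma~\ref{sufficient} on all of $\Omega$ is that we have no a priori bound on $\mathbb E[W(y_k)]$, which may be infinite; localizing on level sets of $W(y_k)$ is exactly what removes this difficulty, and recognizing this is the only genuinely conceptual point in the argument.

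Concretely, for each positive integer $j$ I would set $A_j := \{\omega : W(y_k(\omega)) < j\}$. Since $y_k$ is $\mathcal F_k$-measurable and $W(\cdot)$ is continuous, each $A_j$ is $\mathcal F_k$-measurable, and on $A_j$ we have $W(y_k) < j$, so that $\mathbb E[W(y_k(\omega); A_j)] \leq j < \infty$. First I would apply Lemma~\ref{sufficient} with $A = A_j$ to conclude
\[
\sup_{n\geq k}\mathbb E\left[W\left(y_{n\wedge\tau_k^M}(\omega)\right); A_j\right] < \infty.
\]
Feeding this bound into Lemma~\ref{lemmaforeveryk}, again with $A = A_j$, then yields
\[
\mathbb P\left[A_j \cap \left(y_{n\wedge\tau_k^M}(\omega)\not\rightarrow \bar H^M\right)\right] = 0
\]
for every $j$.

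To finish, I would observe that $W(\cdot)$ is real-valued, so $W(y_k(\omega)) < \infty$ for every $\omega$ and hence $\bigcup_j A_j = \Omega$. Writing
\[
\left\{y_{n\wedge\tau_k^M}(\omega)\not\rightarrow \bar H^M\right\} = \bigcup_j \left(A_j \cap \left\{y_{n\wedge\tau_k^M}(\omega)\not\rightarrow \bar H^M\right\}\right),
\]
countable subadditivity gives $\mathbb P[y_{n\wedge\tau_k^M}(\omega)\not\rightarrow \bar H^M] \leq \sum_j 0 = 0$, as required. I do not expect any serious obstacle: the analytic work—the supermartingale-type estimate and the argument ruling out divergence on sets of finite conditional energy—has already been carried out in Lemmas~\ref{sufficient} and~\ref{lemmaforeveryk}, so what remains is the localization decomposition and a routine union bound.
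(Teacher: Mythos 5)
Your proposal is correct and follows exactly the paper's own argument: the paper likewise defines the level sets $A^l:=\{\omega: W(y_k(\omega))<l\}$, applies Lemma~\ref{sufficient} and then Lemma~\ref{lemmaforeveryk} on each, and concludes from $\mathbb P\left[\bigcup_l A^l\right]=1$. No differences worth noting.
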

\begin{proof}
Define $A^l:=\{\omega: W(y_k(\omega))<l\}$. Clearly $A^l$ is
$\mathcal F_k$-measurable and $\mathbb E[W\left(y_k(\omega)\right);
A^l]<l<\infty$. It follows from Lemma~\ref{sufficient} that
\[
\sup_{n\geq k}\mathbb
E\left[W\left(y_{n\wedge\tau_k^M}(\omega)\right); A^l\right]<\infty.
\]
Lemma~\ref{lemmaforeveryk} now gives us
\[
\mathbb P \left[A^l \bigcap
\left(y_{n\wedge\tau_k^M}(\omega)\not\rightarrow \bar
H^M\right)\right]=0.
\]
 Since $\mathbb P \left[\bigcup_l A^l\right]=1$ it follows that
$\mathbb P \left[y_{n\wedge\tau_k^M}(\omega)\not\rightarrow \bar
H^M\right]=0$
\end{proof}

We now give our main results and a couple of examples.

\begin{theorem}
\label{strongresult} Under assumptions \textbf{(A1)}--\textbf{(A5)},
\[
y_n(\omega)\rightarrow \bar H^M \text{ a.s.}
\]
In particular, $\sup_n \|y_n\|< \infty$ a.s.
\end{theorem}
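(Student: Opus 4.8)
The plan is to assemble the theorem directly from the two preceding results, since the substantive analytic work has already been carried out in the lemmas. First I would observe that Lemma~\ref{stronglemma} is precisely the weak stability statement~(\ref{wst}) in disguise: asserting $\mathbb P[y_{n\wedge\tau_k^M}(\omega)\not\rightarrow \bar H^M]=0$ for every positive integer $k$ is exactly the assertion that $y_{n\wedge\tau_k^M}(\omega)\rightarrow \bar H^M$ almost surely for every such $k$. Thus~(\ref{wst}) holds under assumptions \textbf{(A1)}--\textbf{(A5)}.

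Next I would invoke Lemma~\ref{equivalent}, which under \textbf{(A1)}--\textbf{(A4)} establishes the equivalence of the stopped stability statement~(\ref{wst}) and the unstopped statement~(\ref{gst}). Since~(\ref{wst}) has just been verified, this equivalence immediately delivers~(\ref{gst}), namely $y_n(\omega)\rightarrow \bar H^M$ almost surely, which is the first assertion of the theorem.

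For the ``in particular'' clause I would argue that assumption \textbf{(A4)}(i) forces $\bar H^M$ to be bounded. Indeed, since $W(\cdot)$ is continuous and $W(x)\rightarrow\infty$ as $\|x\|\rightarrow\infty$, there is a radius $R$ with $W(x)>M$ whenever $\|x\|>R$; hence $H^M=\{x:W(x)<M\}$ is contained in the ball of radius $R$, and so is its closure $\bar H^M$. The convergence $y_n(\omega)\rightarrow \bar H^M$ means $\mathrm{dist}(y_n(\omega),\bar H^M)\rightarrow 0$, so for all sufficiently large $n$ the iterates lie within a fixed bounded neighbourhood of $\bar H^M$; together with the finitely many initial terms this yields $\sup_n\|y_n\|<\infty$ almost surely.

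I expect no genuine obstacle at this stage, as the hard work is entirely absorbed into Lemma~\ref{stronglemma} (the supermartingale-type estimate of Lemma~\ref{sufficient} feeding into Lemma~\ref{lemmaforeveryk}) and into the bootstrap from stopped to unstopped convergence in Lemma~\ref{equivalent}. The only point meriting care is the passage from set-convergence to the norm bound, which rests on the boundedness of $\bar H^M$ and hence on the coercivity in \textbf{(A4)}(i); the remainder is a two-line citation of Lemma~\ref{stronglemma} and Lemma~\ref{equivalent}.
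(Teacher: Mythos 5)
Your proposal is correct and follows exactly the paper's own argument, which simply combines Lemma~\ref{stronglemma} (giving~(\ref{wst})) with the equivalence of Lemma~\ref{equivalent} to obtain~(\ref{gst}). Your additional justification of the ``in particular'' clause via the coercivity of $W(\cdot)$ in \textbf{(A4)}(i) is a correct elaboration of a step the paper leaves implicit.
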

\begin{proof}
The result follows from Lemma~\ref{equivalent} and
Lemma~\ref{stronglemma}.
\end{proof}

The next results establish that the iterates $(y_n)$ indeed capture
bahaviour as time goes to infinity.

\begin{proposition}
Under assumptions \textbf{(A1)}--\textbf{(A5)}, almost surely
$a^\omega(n)=a(n)$ for all except finitely many $n$. In particular,
\[
\sum_n a^\omega(n) = \infty \text{ a.s. }
\]
\end{proposition}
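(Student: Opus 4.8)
The plan is to obtain this as a near-immediate consequence of the convergence $y_n(\omega)\to\bar H^M$ already established in Theorem~\ref{strongresult}, combined with the fact that the scaling function is trivial close to the origin. Recall from~(\ref{g}) and Remark~\ref{remark} that the indicator $\mathbb I\{W(y)>N\}$ vanishes whenever $W(y)\le N$, so that one may (and does) take $g(y)=1$ throughout the sublevel region $\{y:W(y)\le N\}$, where $M<N$. Since $a^\omega(n)=a(n)/g(y_n)$ by~(\ref{newstep}), it therefore suffices to show that the iterates eventually enter and remain in $H^N=\{W<N\}$; the divergence $\sum_n a^\omega(n)=\infty$ — the point flagged as needing proof in Remark~\ref{convergenceremark} — will then drop out of \textbf{(A2)}(i).

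First I would record the geometric inclusion $\bar H^M\subset H^N$. Because $M<N$, every point with $W\le M$ satisfies $W<N$, and continuity of $W$ gives $\bar H^M\subseteq\{W\le M\}$; hence $\bar H^M\subset H^N$. Moreover $\bar H^M$ is compact, since the coercivity assumption \textbf{(A4)}(i) forces the sublevel sets of $W$ to be bounded and continuity makes them closed, while $H^N$ is open. Consequently there is a $\delta>0$ so small that $N^{\delta}(\bar H^M)\subset H^N$; this is the standard fact that a compact set lies at positive distance from the complement of any open set containing it.

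Next I would fix a sample path $\omega$ in the almost-sure event $\{y_n(\omega)\to\bar H^M\}$ furnished by Theorem~\ref{strongresult}. By definition of convergence to a set, $\inf_{z\in\bar H^M}\|y_n(\omega)-z\|\to 0$, so there is an index $n_1(\omega)$ beyond which $y_n(\omega)\in N^{\delta}(\bar H^M)\subset H^N$. For every $n\ge n_1(\omega)$ we then have $W(y_n(\omega))<N$, so $g(y_n(\omega))=1$ and hence $a^\omega(n)=a(n)$. This establishes that $a^\omega(n)=a(n)$ for all but finitely many $n$, almost surely. Finally, since the two step-size sequences differ in at most finitely many (individually finite) terms, their tails coincide, and $\sum_n a^\omega(n)=\infty$ follows from $\sum_n a(n)=\infty$ in \textbf{(A2)}(i).

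The proof is essentially a corollary of Theorem~\ref{strongresult}, so there is no serious obstacle; the one point requiring care is the \emph{uniform} choice of $\delta$, namely that a single neighbourhood of the limit set is swallowed by $H^N$. This is exactly where the strict inequality $M<N$ and the compactness of $\bar H^M$ (guaranteed by the coercivity in \textbf{(A4)}(i)) enter, and everything else is bookkeeping.
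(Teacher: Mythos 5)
Your proposal is correct and follows essentially the same route as the paper: invoke Theorem~\ref{strongresult} to get $y_n(\omega)\rightarrow\bar H^M$ a.s., note that $g\equiv 1$ on $H^N$ with $N>M$, and conclude that $a^\omega(n)=a(n)$ eventually, whence \textbf{(A2)}(i) gives the divergence. Your only addition is to spell out the compactness/positive-distance argument showing a whole $\delta$-neighbourhood of $\bar H^M$ sits inside $H^N$, which the paper leaves implicit.
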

\begin{proof}
By Theorem~\ref{strongresult}, $y_n(\omega)\rightarrow \bar H^M$
a.s. Since $g(y)=1$ for $y\in H^N$, and $N>M$, it follows that
$g(y_n)=1$ for all except finitely many $n$.
\end{proof}

Finally, following~\cite{benaim} (see also \cite{borkar}, Chapter 2), we get
\begin{theorem}
\label{theorem}
 Under assumptions \textbf{(A1)}--\textbf{(A5)}, the
iterates $(y_n)$ converge a.s.\ to an internally chain transitive
set of the o.d.e.
\end{theorem}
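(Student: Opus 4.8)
The plan is to deduce the statement from the standard ``o.d.e.\ method'' convergence theorem of \cite{benaim} (equivalently, Theorem~2 of Chapter~2 in \cite{borkar}), which asserts that a stochastic approximation recursion with square-summable step sizes summing to infinity, Lipschitz drift, almost surely bounded iterates, and a convergent noise martingale tracks the flow of~(\ref{ode}) and hence converges almost surely to an internally chain transitive set. Three of the required ingredients are already in hand: Theorem~\ref{strongresult} gives $\sup_n\|y_n\|<\infty$ a.s.; Remark~\ref{randomstep} gives $\sum_n a^\omega(n)^2<\infty$ a.s.; and the preceding Proposition gives $\sum_n a^\omega(n)=\infty$ a.s. It therefore remains to verify the drift and noise conditions and to address the one genuinely nonstandard feature, namely that $a^\omega(n)=a(n)/g(y_n)$ is a random, state-dependent step size.

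For the drift and noise conditions I would work on the almost-sure event $\{\sup_n\|y_n\|=R<\infty\}$ supplied by Theorem~\ref{strongresult}. On this event all iterates lie in the compact ball $\bar B(0,R)$, so by \textbf{(A1)} the restriction of $h(\cdot)$ to $\bar B(0,R)$ is Lipschitz; extending $h$ outside $\bar B(0,R)$ to a globally Lipschitz field alters neither the iterates nor the dynamics on $\bar H^M$, where the limit set lives. By \textbf{(A3)}(ii) and the local boundedness of $f(\cdot)$ we have $\mathbb E[\|M_{n+1}\|^2\mid\mathcal F_n]\le\sup_{\bar B(0,R)}f<\infty$ on this event, whence $\sum_n a^\omega(n)^2\,\mathbb E[\|M_{n+1}\|^2\mid\mathcal F_n]<\infty$ and the noise martingale $\sum_n a^\omega(n)M_{n+1}$ converges almost surely. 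This is exactly the unconstrained analogue of Lemma~\ref{lemma1}, now available precisely because the iterates no longer need to be confined to a neighbourhood of a level set.

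The remaining subtlety, which I expect to be the only nonroutine point, is the random step size, since the classical framework is phrased for deterministic step sizes. Here I would invoke the preceding Proposition: almost surely $a^\omega(n)=a(n)$ for all but finitely many $n$, say for $n\ge N_0(\omega)$. Consequently the tail recursion $y_{n+1}=y_n+a(n)[h(y_n)+M_{n+1}]$, $n\ge N_0(\omega)$, is literally the canonical scheme~(\ref{SA}) driven by the deterministic Robbins--Monro step sizes $a(n)$ and by the same martingale differences $M_{n+1}$. Since convergence to an internally chain transitive set is a property of the limit set, it is insensitive both to the finitely many modified initial steps and to the random starting index $N_0(\omega)$. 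Forming the interpolated trajectory on the time scale $t(n)=\sum_{j<n}a^\omega(j)$, which diverges to $\infty$ and eventually increments exactly by $a(n)$, the convergent noise estimate above together with the Lipschitz drift shows, by the usual Gronwall argument already used in the proof of Lemma~\ref{bounded}, that this interpolation is an asymptotic pseudotrajectory of the flow of~(\ref{ode}). The theorem of Bena\"{\i}m then identifies its limit set as internally chain transitive, completing the proof.
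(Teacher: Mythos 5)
Your proposal is correct and follows essentially the same route as the paper, which gives no explicit proof but simply invokes the standard convergence theorem of Bena\"{\i}m (and Chapter~2 of \cite{borkar}) once a.s.\ boundedness (Theorem~\ref{strongresult}), the step-size conditions (Remark~\ref{randomstep} and the preceding Proposition), and the noise convergence are in hand. Your additional care over the random step size --- reducing to the deterministic scheme via $a^\omega(n)=a(n)$ for all but finitely many $n$ --- is exactly the intended reading and fills in detail the paper leaves implicit.
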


We also get a condition for the convergence of the iterates $(x_n)$
obtained  by the original iteration scheme as given by~(\ref{SA}).
\begin{theorem}
Under assumptions \textbf{(A1)}--\textbf{(A5)}, if
\[
\sup_{x\in \mathbb R^d}\frac{\|h(x)\|^2 +
f(x)}{1\wedge W(x)}= c_\infty<\infty
\]
then the original iterates $(x_n)$ converge a.s.\ to an internally
chain transitive set of the o.d.e.
\end{theorem}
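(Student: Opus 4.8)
The plan is to recognize this statement as a direct corollary of Theorem~\ref{theorem}, obtained by taking $N=\infty$ in the construction of the modified scheme so that the adapted iterates $(y_n)$ coincide with the original iterates $(x_n)$. The key is that the hypothesis $c_\infty<\infty$ is precisely what licenses this choice. First I would verify that the bound defining $c_N$ in~(\ref{c}) can be met with $N=\infty$. On the relevant set $\bar H^\infty\backslash H^M=\mathbb R^d\backslash H^M$ one has $W(y)\geq M\geq 1$, since $M$ is a positive integer; hence $1\wedge W(y)=1$ there and
\[
\frac{\|h(y)\|^2+f(y)}{W(y)}\;\leq\;\frac{\|h(y)\|^2+f(y)}{1\wedge W(y)}\;\leq\; c_\infty.
\]
Thus $\sup_{y\in\mathbb R^d\backslash H^M}\big(\|h(y)\|^2+f(y)\big)/W(y)\leq c_\infty<\infty$, and one may legitimately set $N=\infty$ with $c_\infty$ (or $1\vee c_\infty$) playing the role of $c_N$.

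Having secured $N=\infty$, I would invoke Remark~\ref{remark}, which records that in this case the scaling factor satisfies $g(y)=1$ for every $y\in\mathbb R^d$. Consequently the random step sizes reduce to $a^\omega(n)=a(n)/g(y_n)=a(n)$, and the modified recursion~(\ref{MSA}) becomes identical to the original recursion~(\ref{SA}); with the same initial condition the two sequences agree pathwise, $y_n=x_n$ for all $n$. Assumptions \textbf{(A1)}--\textbf{(A5)} are in force throughout, so Theorem~\ref{theorem} applies verbatim to $(y_n)$ and asserts that $(y_n)$ converges a.s.\ to an internally chain transitive set of the o.d.e.~(\ref{ode}). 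Substituting $y_n=x_n$ yields the claim for the original iterates.

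Since the argument is a reduction, there is no genuinely hard step; the only point demanding care is the first one, namely confirming that the finiteness condition stated with the denominator $1\wedge W(x)$ indeed controls the supremum in~(\ref{c}), which carries the plain denominator $W(y)$. The use of $1\wedge W$ rather than $W$ in the hypothesis is what makes the condition meaningful near the set where $W$ is small (where $W$ itself could otherwise vanish in the denominator), while the restriction $W(y)\geq M\geq 1$ on $\mathbb R^d\backslash H^M$ is exactly what collapses the two denominators and lets $c_\infty$ serve as an admissible $c_N$. Once this is checked, everything else is an application of results already established.
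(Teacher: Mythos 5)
Your proposal is correct and follows essentially the same route as the paper: set $N=\infty$ (justified via Remark~\ref{remark} and the hypothesis $c_\infty<\infty$), observe that then $g\equiv 1$ so $a^\omega(n)=a(n)$ and $y_n=x_n$, and apply Theorem~\ref{theorem}. Your explicit check that $W\geq M\geq 1$ off $H^M$ collapses the denominator $1\wedge W$ to $W$, so that $c_\infty$ is an admissible $c_N$ in~(\ref{c}), is a detail the paper leaves implicit but is exactly the intended justification.
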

\begin{proof}
By Remark~\ref{remark} we can set $N=\infty$ in~(\ref{c}).
Now~(\ref{g}) gives $g(x)=1$ for all $x\in\mathbb R^d$.
Equation~(\ref{wgc}) continues to hold with $c_\infty$ in place of
$c_N$. The choice of $g(\cdot)$ gives $a^\omega(n)=a(n)$ for all
$n$, or $x_n(\omega)=y_n(\omega)$ for all $n$. The result now
follows from Theorem~\ref{theorem}.
\end{proof}

\begin{example}
 Consider the scalar iteration
\begin{displaymath}
x_{n+1} = x_n - a(n)x_n\exp{(|x_n|)}(1 + \xi_{n+1}),
\end{displaymath}
where $\{\xi_n\}$ are i.i.d.\ $N(0, 1)$ (say). Here $W(x) = x^2$ and
$g(x) = O(\exp{(|x|)})$ will do.
\end{example}

\begin{example}
\label{example} Consider the scalar iteration~(\ref{SA}) with
bounded $h(\cdot)$ satisfying
\[
\lim_{x\uparrow\infty}h(x) = -\lim_{x\downarrow -\infty}h(x) = -1,
\]
with $\{M_n\}$ i.i.d.\ uniform on $[-1, 1]$. Then $W(x) = x^2$ and
$g(x) \equiv 1$ will do. In particular, there is no need to
adaptively scale the step sizes.
\end{example}

Note that neither of these two examples, even the apparently simple
Example~\ref{example}, is covered by the tests of \cite{abounadi},
\cite{borkarmeyn}, \cite{Tsi}.\\

\textbf{Acknowledgements:} The author would like to thank Prof.\ V.\
S.\ Borkar for suggesting this problem and for his comments on an
earlier draft which included, in particular, the idea of using an
adaptive scheme.

\end{document}